\newcommand{\R}{{\mathbb R}}
\newcommand{\Z}{{\mathbb Z}}
\newcommand{\C}{{\mathbb C}}
\newcommand{\be}{\begin{eqnarray}}
\newcommand{\ben}{\begin{eqnarray*}}
\newcommand{\en}{\end{eqnarray}}
\newcommand{\enn}{\end{eqnarray*}}
\newcommand{\ba}{\backslash}
\newcommand{\pa}{\partial}
\newcommand{\ov}{\overline}
\newcommand{\curl}{{\rm curl\,}}
\newcommand{\G}{\Gamma}
\newcommand{\eps}{\epsilon}
\newcommand{\om}{\omega}
\newcommand{\hx}{\hat{x}}
\newcommand{\bbS}{\mathbb{S}^2}
\newcommand{\bE}{\mathbf{E}}
\newcommand{\bH}{\mathbf{H}}
\newcommand{\bd}{\mathbf{d}}
\newcommand{\bp}{\mathbf{p}}
\newcommand{\bx}{\mathbf{x}}
\newcommand{\by}{\mathbf{y}}
\newcommand{\bz}{\mathbf{z}}
\newcommand{\I}{\ensuremath{\mathrm{i}}}
\newcommand{\D}{\ensuremath{\mathrm{d}}}
\DeclareMathOperator{\EP}{EP}
\DeclareMathOperator{\MP}{MP}
\DeclareMathOperator{\Dcurl}{curl}
\DeclareMathOperator{\Dgrad}{grad}
\DeclareMathOperator{\DDiv}{Div}
\title{Inverse electromagnetic obstacle scattering problems with multi-frequency sparse backscattering far field data}
\author{Tilo Arens\thanks{Department of Mathematics, Karlsruhe Institute of Technology, 76131 Karlsruhe, Germany ({\tt tilo.arens@kit.edu}).}
\and
Xia Ji\thanks{LSEC, Academy of Mathematics and Systems Science, Chinese Academy of Sciences, 100190 Beijing, China ({\tt jixia@lsec.cc.ac.cn}).}
\and
Xiaodong Liu\thanks{Institute of Applied Mathematics, Academy of Mathematics and Systems Science, Chinese Academy of Sciences, 100190 Beijing, China({\tt xdliu@amt.ac.cn}).}
}
\begin{document}

\maketitle

\begin{abstract}
This paper is dedicated to design a direct sampling method of inverse electromagnetic scattering problems, which uses multi-frequency sparse backscattering far field data for reconstructing the boundary of perfectly conducting obstacles. We show that a smallest strip containing the unknown object can be approximately determined by the multi-frequency backscattering far field data at two opposite observation directions. The proof is based on the Kirchhoff approximation and Fourier transform. Such a strip is then reconstructed by an indicator, which is the absolute value of an integral of the product of the data and some properly chosen function over the frequency interval. With the increase of the number of the backscattering data, the location and shape of the underlying object can be reconstructed.
Numerical examples are conducted to show the validity and robustness of the proposed sampling method. The numerical examples also show that the concave part of the underlying object can be well reconstructed, and the different connected components of the underlying object can be well separated.

\vspace{.2in} {\bf Keywords:}
Electromagnetic obstacle scattering, sparse backscattering data, uniqueness, direct sampling methods.

\vspace{.2in} {\bf AMS subject classifications:}
35R30, 35P25, 78A46

\end{abstract}
\section{Introduction}

Scattering of electromagnetic waves plays an important role in many fields of applied mathematics such as radar, medical imaging, nondestructive testing and geophysical exploration. The inverse scattering problem considered in this paper is concerned with the reconstruction of the boundary of an unknown object from the knowledge of the incident time-harmonic plane wave and of the far field pattern associated with the scattered field. Many research papers have been dedicated to the formulation and analysis of the over-determined problem at a fixed frequency where measurements are taken for all directions of incidence of plane waves and for all observation directions. Mathematically, much is known for such full aperture data: Uniqueness of solution for the inverse problem can then be established and many reconstruction algorithms have been proposed \cite{CK}. We refer to the monograph \cite{CakoniColtonMonk} for the well developed linear sampling method in inverse electromagnetic scattering problems using full aperture data and to Chapter 5 of \cite{KirschGrinberg} for the factorization method for inverse scattering of electromagnetic waves by a penetrable object consisting of an inhomogeneous medium.

However, in many applications it is difficult to conduct an experiment in which measurements are taken simultaneously in all observation directions around an unknown scatterer. Thus, limited aperture problems arise in many practical applications. Indeed, for the linear sampling method limited aperture data can present a severe challenge, see Section 3.4 of \cite{CakoniColtonMonk}. Instead of developing methods using limited-aperture data, an alternative approach is to recover the data that cannot be measured directly. Subsequently, methods using full aperture data can be employed. We refer to \cite{JiLiuXi, LiuSun} for some data retrieval techniques along these lines.

From the practical point of view, backscattering far field data, where the observation direction and the incident direction are opposite to each other, is of considerable interest. To make the inverse scattering problem solvable, multiple frequencies have to be used. The inverse backscattering problem has attracted and challenged mathematicians in the last few years \cite{EskinRalston,HaddarKusiakSylvester, HahnerKress,LiLiuWang,NSHS, Shin} .

In this work, we consider a backscattering obstacle problem with the multi-frequency far field data at sparsely distributed observation directions. In particular, we are interested in what kind of information about the unknown object can be determined by the multi-frequency backscattering far field data at two observation directions. This work is a generalization of the recent work on inverse electromagnetic source scattering problem \cite{JL-Maxwell-source}, where the multi-frequency far field data at sparsely distributed directions are used.
However, the inverse source problem is linear while the inverse obstacle problem is nonlinear, making the generalization nontrivial and difficult. Our approach is to consider the physical optics approximation at high frequency. We show that a particular strip containing the unknown objects can be approximately determined from the multi-frequency backscattering far field data at two different observation directions. Furthermore, a rough convex support of the underlying object can be well captured by the multi-frequency backscattering far field data at three pairs of opposite observation directions.

The second contribution of this paper is to introduce a direct sampling method for the boundary reconstruction. The indictor is defined by an integral of the product of the backscattering far field data and an exponential function over a finite frequency band. This idea originates in the direct sampling method for inverse source scattering problems \cite{AHLS, JL-elastic, JL-Maxwell-source}. We also refer to \cite{JL-Acoustic-sparse} for a corresponding study for the acoustic obstacle reconstruction problem.
Numerical examples show that the strip containing the object can be reconstructed by the multi-frequency backscattering far field data at two observation directions.
With the increase of the observation directions, the shape can further be reconstructed, even the concave part.

This paper is organized as follows. In the next section, we fix the notations and formulate the direct electromagnetic obstacle scattering problem. We also precisely formulate the inverse problem to be solved. In section \ref{Sec-POA}, we recall the physical optics approximation and use it to derive an approximation to the far field pattern valid at high frequencies. We proceed to establish a uniqueness result for the inverse problem in section \ref{Sec-Uni}. Section \ref{Sec-DSM} is devoted to propose a direct sampling method and give an explanation why this method works. Finally, in section \ref{Sec-Num}, some numerical simulations are presented to validate the effectiveness and robustness of the proposed direct sampling method.

\section{Inverse electromagnetic scattering from a perfect conductor}\label{ESPC}

In this section, we set the stage by presenting the time harmonic electromagnetic obstacle scattering problem under consideration. Let us begin with the notations used throughout this paper. Vectors are distinguished from scalars by the use of bold typeface.
For a vector $\mathbf{a}:=(a_1, a_2, a_3)^{\rm T}\in\C^3$, where the superscript $``{\rm T}"$ denotes the transpose, we define the Euclidean norm of $\mathbf{a}$ by $|\mathbf{a}|:=\sqrt{\mathbf{a\cdot \ov{a}}}$, where
$\ov{\mathbf{a}}:=(\ov{a_1}, \ov{a_2}, \ov{a_3})^{\rm T}\in\C^3$ and $\ov{a_j}$ is the complex conjugate of $a_j$.
Denote by $\bbS:=\{\mathbf{x}\in \R^3: |\mathbf{x}|=1\}$ the unit sphere in $\R^3$.

We consider electromagnetic wave propagation in a non-conductive isotropic homogeneous medium in $\R^3$ with electric permittivity $\eps$ and magnetic permeability $\mu$. Define $k:=\om\sqrt{\eps\mu}$ to be the wave number at frequency $\om>0$. The incident field of particular interest is the electromagnetic
plane wave
\be\label{EiHi}
\bE^{i}({\bf x,d,p},k)= \bp \, e^{ik\bx\cdot \bd},\quad \bH^{i}({\bf x,d,p},k)=(\bd\times \bp) \, e^{ik\bx\cdot \bd},\quad \bx\in\R^3, \bd,\bp\in \bbS,\qquad
\en
\noindent where the unit vector $\bd$ describes the direction of propagation, and the polarization vector $\bp$ must be orthogonal to the direction of propagation, so $\bd\cdot\bp=0$.
Let $D$ be a bounded domain in $\R^3$ with Lipschitz boundary $\pa D$ such that the exterior
$D^{e}:=\R^3\ba\ov{D}$ of $D$ is connected.
The scatterer $D$ gives rise to a pair of scattered electromagnetic fields
$(\bE^s, \bH^s)\in H_{\text{loc}}(\curl, D^{e})\times H_{\text{loc}}(\curl, D^{e})$ which satisfy the
time-harmonic Maxwell equations \cite{KirschHettlich}
\be\label{EH_Maxwellequations}
\curl \bE^s-ik\bH^s=0,\quad \curl \bH^s+ik\bE^s=0\qquad\text{in } D^{e}.
\en
For a perfect conductor, we impose the perfect conducting boundary condition, i.e.,
\be\label{pec}
\bm{\bm{\nu}} \times \bE=0 \qquad \text{on } \pa D,
\en
where $\bm{\nu}$ is the unit outward normal to $\pa D$ and $\bE:=\bE^i+\bE^s$ is the total electric field.
The scattered fields $(\bE^s, \bH^s)$ are out-going. Mathematically, this means that the scattered field
$(\bE^s, \bH^s)$ satisfies the Silver-M\"{u}ller radiation condition
\be\label{SMrc}
\lim_{| \bx | \to \infty}(\bH^s \times \bx - | \bx | \,  \bE^s ) = 0
\en
where the limit is attained uniformly in all directions ${\bf\hx}:=\bx/ | \bx |$.
It is well known that there exists a unique solution $(\bE^s, \bH^s)\in H_{\text{loc}}(\curl, D^{e})\times H_{\text{loc}}(\curl, D^{e})$
of the scattering system (\ref{EH_Maxwellequations})-(\ref{SMrc}) (see e.g., \cite{Monk}).

It is also well known (see e.g., \cite{CK,Monk}) that every radiating solution of \eqref{EH_Maxwellequations} has an asymptotic behavior of the form
\be\label{Easy}
{\bf E^s}(\bx, \bd, \bp,k)=\frac{e^{ik|{\bf x}|}}{4\pi|{\bf x}|}{\bf E}^{\infty}({\bf\hx}, \bd, \bp, k) + O\left(\frac{1}{|{\bf x}|^2}\right),\quad |{\bf x}|\rightarrow\infty,
\en
uniformly w.r.t. ${\bf\hx}$. The vector field ${\bf E}^{\infty}({\bf\hx}, \bd, \bp, k)$ is known as the electric far field pattern of the electric scattered field $\bE^s(\bx, \bd, \bp,k)$. It is an analytic function on the unit sphere $\bbS$ with respect to observation direction ${\bf \hx}$ and it is a tangential field, i.e., ${\bf \hx\cdot E^{\infty}}({\bf\hx}, \bd, \bp, k)=0$ for all ${\bf\hx}\in\bbS$.
For the scattering of plane waves by a perfect conductor $D$, the far field pattern is given by
\be
\label{HuygenPrinciple} 
\bE^{\infty}({\bf\hx}, \bd, \bp, k) 
&=&ik{\bf\hx}\times\int_{\pa D} [\bm{\nu}(\by)\times \bH(\by)]\times {\bf\hx} \, e^{-ik{\bf\hx}\cdot\by} \, ds(\by), \quad {\bf\hx}, \bd, \bp\in\bbS .
\en
This equation follows from the Stratton-Chu representation formula \cite{CK}.

Fixing two wave numbers $0<k_{min}<k_{max}$, we consider the the scattering system (\ref{EH_Maxwellequations})-(\ref{SMrc}) with
\be\label{kassumption}
k\in K:= [k_{min}, k_{max}] \, .
\en
This paper is concerned with the following inverse problem:

\smallskip


\begin{quote}
  {\bf IP:} {\em Find the location and shape of the obstacle $D$ from the knowledge of the electric far field pattern ${\bf E}^{\infty}({\bf\hx}, \bd, \bp, k)$, $k\in K$, at finitely many observation directions ${\bf\hx}$ for finitely many incident directions $\bd$ with corresponding polarizations $\bp$}.
\end{quote}

In particular, ${\bf E}^{\infty}(-\bd, \bd, \bp, k)$ denotes the backscattering electric far field pattern.
For some positive integer $l\in\Z$, define
\ben
\Theta_l:=\{ \pm\bbtheta_1,  \pm\bbtheta_2, \cdots, \pm\bbtheta_l | \, \bbtheta_j\in \bbS, j=1,2,\cdots,l\},
\enn
which is a subset of $\bbS$ with finitely many directions. To each $\bd \in \Theta_l$, a polarization $\bp(\bd)$ is assigned. {\bf\em The inverse backscattering problem consists in the determination of $D$
from ${\bf E}^{\infty}(-\bd, \bd, \bp(\bd), k)$ for all $\bd\in \Theta_l$ and all $k$ in a bounded band $K$.}
That is, roughly speaking, whether we can determine the location and shape of the obstacle $D$ by measuring the echoes produced by
incident plane waves in the directions $\bd\in\Theta_l$.

\section{Physical optics approximation}\label{Sec-POA}

Consider the scattering of a plane wave \eqref{EiHi} with incident direction $\bd$ by a perfectly conducting plane $\G:=\{\bx\in\R^3: \bx\cdot\bm{\nu} = 0\}$ with normal vector $\bm{\nu}$ that contains the origin. In this case, the scattered field is given explicitly by
\ben
\bE^s (\bx, \bd, \bp, k) = -\bp^s e^{ik\bx\cdot \bd^s},
\quad
\bH^s (\bx, \bd, \bp, k) = -(\bd^s\times\bp^s) e^{ik\bx\cdot\bd^s},
\enn
where $\bd^s = \bd -2 \, ( \bm{\nu}\cdot\bd) \,  \bm{\nu}$ and $\bp^s = \bp -2 \, (\bm{\nu}\cdot\bp) \,  \bm{\nu}$. Indeed, straightforward calculations show that
\ben
\bm{\nu}\times (\bd^s\times\bp^s) = -\bm{\nu}\times(\bd\times\bp)
\qquad\text{and}\qquad
\bx\cdot \bd = \bx\cdot \bd^s \, ,  \quad \bx\in \G,
\enn
and therefore
\ben
\bm{\nu}\times(\bE^i+\bE^s) =0 \quad\mbox{and}\quad \bm{\nu}\times (\bH^i+\bH^s) = 2 \bm{\nu}\times \bH^i \qquad \text{on } \Gamma.
\enn
In the physical optics approximation, one assumes that the wavelength is significantly smaller than the size of the obstacle. Thus, the boundary of the obstacle $D$ locally may be considered at each point $\bx\in\pa D$ as a plane with normal $\bm{\nu}(\bx)$. The corresponding approximated scattered fields will be denoted by $(\bE^s_{\text{po}},\bH^s_{\text{po}})$ with similar notation for total fields, far field patterns, etc. Introducing the illuminated region $\pa D_{-}(\bd):=\{\bx\in\pa D |\, \bm{\nu}(\bx)\cdot \bd<0\}$ and the shadow region $\pa D_{+}(\bd):=\{\bx\in\pa D |\, \bm{\nu}(\bx)\cdot \bd\geq0\}$, respectively, for a plane wave in the incident direction $\bd$, this leads to setting
\be\label{Kirchhoff-pec}
\bm{\nu}\times \bH_{\text{po}}  = \left\{
                            \begin{array}{ll}
                              2\bm{\nu}\times \bH^i  , & \hbox{on $\pa D_{-}(\bd)$;} \\
                              0, & \hbox{on $\pa D_{+}(\bd)$.}
                            \end{array}
                          \right.
\en

From here on, throughout the paper, we will assume that $k_{min}>0$ is large enough such that the physical optics approximation $(\bE_{\text{po}},\bH_{\text{po}})$ to $(\bE,\bH)$ is accurate.
Inserting \eqref{Kirchhoff-pec} into \eqref{HuygenPrinciple}, with the help of the representation \eqref{EiHi} of the plane waves, we deduce that
\be\label{EinftyKirchhoff}
&&\bE^{\infty}_{\text{po}}({\bf\hx}, \bd, \bp, k)\cr
&=&2ik \, {\bf\hx}\times\int_{\pa D_{-}(\bd)} [\bm{\nu}(\by)\times \bH^i(\by)]\times {\bf\hx} \, e^{-ik{\bf\hx}\cdot\by}ds(\by)\cr
&=&2ik \, {\bf\hx}\times\int_{\pa D_{-}(\bd)} [\bm{\nu}(\by)\times (\bd\times\bp)]\times {\bf\hx} \,  e^{ik(\bd-{\bf\hx})\cdot\by}ds(\by), \quad {\bf\hx}, \bd, \bp\in\bbS, k\in K.\quad
\en
The implies that the shadow region $\pa D_{+}(\bd)$ gives no contribution to the far field pattern $\bE^{\infty}_{\text{po}}({\bf\hx}, \bd, \bp, k)$. Thus, it is impossible to determine the shadow region $\pa D_{+}(\bd)$ at high frequencies using the far field pattern $\bE^{\infty}_{\text{po}}({\bf\hx}, \bd, \bp, k)$.

\section{Uniqueness}\label{Sec-Uni}

We consider the following generalized backscattering electrical far field patterns over some band of frequencies
\ben
\bE^{\infty}_{\text{po}}({\bf\hx},\bd,\bp,k) \quad\mbox{and}\quad \bE^{\infty}_{\text{po}}(-{\bf\hx},-\bd,\bp,k),  \quad k\in K.
\enn
for one fixed observation direction ${\bf\hx}$, one incident direction $\bd$ and one polarization $\bp$. In particular, this reduces to the classical backscattering data if ${\bf\hx} = -\bd$.

For any fixed direction $\bbtheta\in\Theta_l$, the $\bbtheta$-strip hull of $D$ is defined by
\be\label{S_D_def}
S_{D}(\bbtheta):=  \{ \by\in \mathbb \R^{3}\; | \; \inf_{z\in D}z\cdot \bbtheta \leq y\cdot \bbtheta\leq \sup_{z\in D}z\cdot \bbtheta\},
\en
which is the smallest strip (region between two parallel hyper-planes) containing $\ov{D}$ with normals $\pm \bbtheta$.

\begin{figure}[htbp]
\centering
\includegraphics[width=3in]{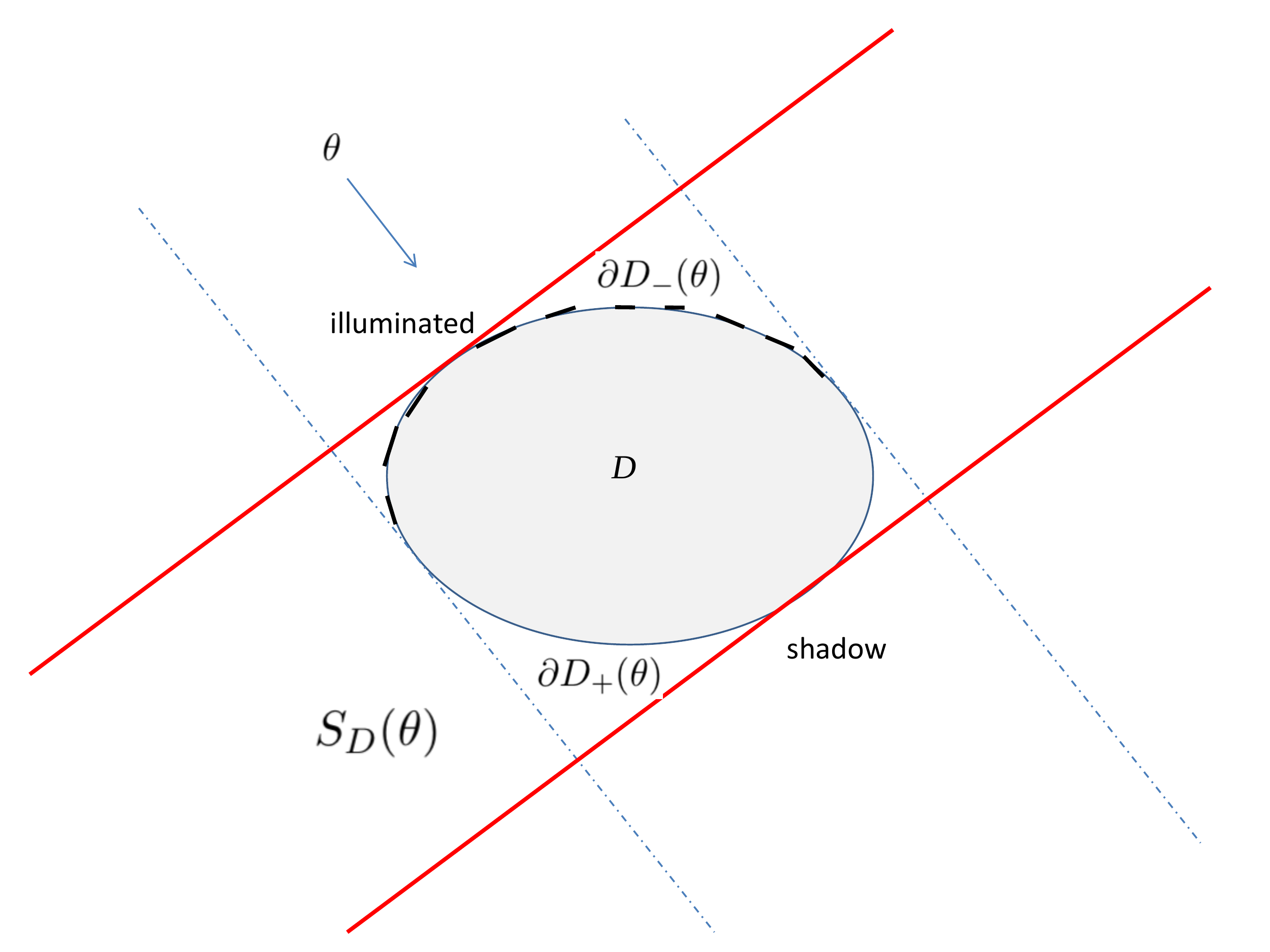}
\caption{The $\bbtheta$-strip $S_{D}(\bbtheta)$, illuminated part $\pa D_{-}(\bbtheta)$ and shadow region $\pa D_{+}(\bbtheta)$.}
\label{IlluminatedShadow}
\end{figure}

\begin{theorem}\label{Strip-uniqueness}
For one fixed incident direction $\bd\in \bbS$, one fixed polarization $\bp\in \bbS$ and one fixed observation direction $\bf\hx\in\bbS$ such that
\be\label{Aassumption}
A({\bf\hx}, \bd, \bp):={\bf\hx}\times \big( [(\bd-\bf\hx)\times(\bd\times\bp)]\times {\bf\hx} \big) \neq {\bf 0} \, ,
\en
define
\be\label{phi}
\phi:=\frac{\bd-\bf\hx}{|\bd-\bf\hx|}.
\en
Then the corresponding $\phi$-strip $S_D(\phi)$ is uniquely determined by the generalized backscattering electric far field patterns $\bE^{\infty}_{\text{po}}({\bf\hx},\bd,\bp,k)$ and $\bE^{\infty}_{\text{po}}(-{\bf\hx},-\bd,\bp,k)$ for all $k\in K$.
\end{theorem}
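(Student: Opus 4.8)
The plan is to reduce each of the two frequency--dependent data sets, via the physical optics far field formula \eqref{EinftyKirchhoff}, to a one--dimensional Fourier transform (in the frequency variable) of a compactly supported density on $\R$, and then to read the two bounding hyperplanes of $S_D(\phi)$ off the endpoints of the supports of those densities. To set this up, put $\tau:=|\bd-{\bf\hx}|>0$, so that $\phi=(\bd-{\bf\hx})/\tau$ and $ik(\bd-{\bf\hx})\cdot\by=ik\tau\,\phi\cdot\by$, and observe that $\bm{a}\mapsto L[\bm{a}]:={\bf\hx}\times\big([\bm{a}\times(\bd\times\bp)]\times{\bf\hx}\big)$ is a fixed linear map on $\R^3$, independent of $\by$. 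Pulling $L$ through the integral in \eqref{EinftyKirchhoff} gives
\[
\frac{1}{2ik}\,\bE^{\infty}_{\text{po}}({\bf\hx},\bd,\bp,k)=\int_{\pa D_{-}(\bd)}L[\bm{\nu}(\by)]\,e^{ik\tau\,\phi\cdot\by}\,ds(\by),
\]
and applying \eqref{EinftyKirchhoff} with $({\bf\hx},\bd)$ replaced by $(-{\bf\hx},-\bd)$, whose illuminated part is $\pa D_{-}(-\bd)=\{\by\in\pa D:\bm{\nu}(\by)\cdot\bd>0\}$, gives
\[
-\frac{1}{2ik}\,\bE^{\infty}_{\text{po}}(-{\bf\hx},-\bd,\bp,k)=\int_{\pa D_{-}(-\bd)}L[\bm{\nu}(\by)]\,e^{-ik\tau\,\phi\cdot\by}\,ds(\by).
\]
The hypothesis \eqref{Aassumption} is precisely the statement $L[\phi]=A({\bf\hx},\bd,\bp)/\tau\neq{\bf 0}$; in particular it forces ${\bf\hx}\neq\bd$, hence $\phi\cdot\bd=(1-{\bf\hx}\cdot\bd)/\tau>0$.

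Next I would foliate $\pa D$ by the level sets of the linear height function $\by\mapsto\phi\cdot\by$. Its surface gradient on $\pa D$ has modulus $|\phi\times\bm{\nu}(\by)|$, so the coarea formula converts the two surface integrals above into genuine one--dimensional transforms,
\[
\int_{\pa D_{-}(\bd)}L[\bm{\nu}(\by)]\,e^{ik\tau\,\phi\cdot\by}\,ds(\by)=\int_{\R}\bm{g}(t)\,e^{ik\tau t}\,dt,\qquad \bm{g}(t):=\int_{\pa D_{-}(\bd)\cap\{\phi\cdot\by=t\}}\frac{L[\bm{\nu}(\by)]}{|\phi\times\bm{\nu}(\by)|}\,d\mathcal{H}^{1}(\by),
\]
and likewise $-\tfrac{1}{2ik}\bE^{\infty}_{\text{po}}(-{\bf\hx},-\bd,\bp,k)=\int_{\R}\bm{h}(t)\,e^{-ik\tau t}\,dt$ with $\bm{h}$ the analogous density over $\pa D_{-}(-\bd)$ (the weight $1/|\phi\times\bm{\nu}|$ is integrable because the cross--section length vanishes at the same rate where $\bm{\nu}=\pm\phi$; if $\pa D$ has flat faces with normal $\pm\phi$ one reads $\bm{g},\bm{h}$ as finite measures). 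Set $m:=\inf_{z\in D}z\cdot\phi$ and $M:=\sup_{z\in D}z\cdot\phi$, so $S_D(\phi)=\{\by:m\le\phi\cdot\by\le M\}$. Because $\phi\cdot\bd>0$, a supporting hyperplane of $\ov{D}$ with outer normal $-\phi$ touches $\ov{D}$ inside $\pa D_{-}(\bd)$, and one with outer normal $\phi$ touches $\ov{D}$ inside $\pa D_{-}(-\bd)$; hence $\bm{g}$ and $\bm{h}$ are compactly supported with $\mathrm{supp}\,\bm{g}\subseteq[m,\infty)$ and $\mathrm{supp}\,\bm{h}\subseteq(-\infty,M]$. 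Since $\bm{g}$ is a compactly supported finite density, $k\mapsto\int_{\R}\bm{g}(t)e^{ik\tau t}\,dt$ extends to an entire function of $k$, so its values on the band $K$ --- which equal $\tfrac{1}{2ik}\bE^{\infty}_{\text{po}}({\bf\hx},\bd,\bp,k)$ and are given by the data --- determine it on all of $\R$ by analytic continuation, and hence determine $\bm{g}$ by Fourier inversion; the second data set determines $\bm{h}$ in the same way.

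It then remains only to recover $m$ and $M$, and this last step is the crux. I would prove $\inf\mathrm{supp}\,\bm{g}=m$ and $\sup\mathrm{supp}\,\bm{h}=M$, i.e.\ that the densities reach the extreme heights rather than being truncated by cancellation in their defining integrals. For $t$ slightly above $m$, the level set $\pa D_{-}(\bd)\cap\{\phi\cdot\by=t\}$ is confined to a small neighbourhood of the contact set of $\ov{D}$ with the supporting hyperplane $\{\phi\cdot\by=m\}$, where the outer normal is $-\phi$ (or close to it), so the integrand of $\bm{g}(t)$ equals $L[-\phi]=-A({\bf\hx},\bd,\bp)/\tau\neq{\bf 0}$ up to a small error; a short computation with the local geometry of $\pa D$ there then shows that $\bm{g}$ cannot vanish identically on any interval $(m,m+\delta)$ --- in the strictly curved $C^2$ case $\bm{g}(t)\to c\,L[-\phi]$ with $c>0$ as $t\downarrow m$, while a flat face with normal $-\phi$ produces an atom of $\bm{g}$ at $t=m$. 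The symmetric argument at the maximiser yields $\sup\mathrm{supp}\,\bm{h}=M$, whence $m$, $M$, and with them the strip $S_D(\phi)$, are uniquely determined by the data. The main obstacle is exactly this non-degeneracy at the endpoints: one must rule out that the transverse cross--sections cancel over a whole interval adjacent to a supporting hyperplane, and it is here that assumption \eqref{Aassumption} is indispensable, since without $L[\pm\phi]\neq{\bf 0}$ the leading term of $\bm{g}$ (resp.\ $\bm{h}$) at its endpoint would vanish and the support could retract.
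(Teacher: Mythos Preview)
Your argument is sound in outline but takes a genuinely different route from the paper's, and the difference is instructive.

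The paper does \emph{not} treat the two data sets separately. Instead it forms the single combination
\[
U^{\infty}({\bf\hx},\bd,\bp,k):=\bE^{\infty}_{\text{po}}({\bf\hx},\bd,\bp,k)+\overline{\bE^{\infty}_{\text{po}}(-{\bf\hx},-\bd,\bp,k)},
\]
so that the integrals over $\pa D_{-}(\bd)$ and $\pa D_{+}(\bd)$ merge into one integral over the \emph{full} boundary $\pa D$. That full boundary integral is then converted by the divergence theorem (via $\bm{\nu}\times[(\bd\times\bp)e^{ik(\bd-{\bf\hx})\cdot\by}]$ as the trace of a curl) into a volume integral,
\[
U^{\infty}({\bf\hx},\bd,\bp,k)=-2k^{2}\,A({\bf\hx},\bd,\bp)\int_{D}e^{ik(\bd-{\bf\hx})\cdot\by}\,d\by
=-2k^{2}\,A({\bf\hx},\bd,\bp)\int_{\R}\hat{\chi}(\alpha)\,e^{-ik\alpha}\,d\alpha,
\]
with $\hat{\chi}(\alpha)=\int_{\Pi_{\alpha}}\chi_{D}\,ds$ the two--dimensional Lebesgue measure of the planar slice $D\cap\Pi_{\alpha}$. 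After analytic continuation and Fourier inversion (exactly as you do), one recovers $\hat{\chi}$, and the strip is read off immediately because $\hat{\chi}$ is a \emph{nonnegative scalar} density with $\hat{\chi}(\alpha)>0$ if and only if $\Pi_{\alpha}$ meets $D$. No endpoint non--degeneracy argument is needed at all.

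By contrast, you keep the two data sets separate, apply the coarea formula on the surface pieces $\pa D_{\pm}(\bd)$, and obtain vector--valued densities $\bm{g},\bm{h}$ that could in principle suffer cancellation. You then have to argue that $\bm{g}$ actually reaches down to $t=m$ and $\bm{h}$ up to $t=M$. Your sketch of this (``normal $\approx-\phi$ near the minimiser, so the integrand $\approx L[-\phi]\neq{\bf 0}$'') is correct for $C^{2}$ strictly convex obstacles, and the flat--face case is handled by your remark about atoms; but for the general Lipschitz domains assumed in the paper the ``short computation with the local geometry'' is not obvious --- corners, multiple contact components, and the singular weight $1/|\phi\times\bm{\nu}|$ all need care. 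The paper's conjugate--and--add trick, followed by the divergence theorem, sidesteps this entirely by replacing your signed surface density with the manifestly positive slice area of $D$. That is the main idea you are missing, and it is what makes the argument go through cleanly without any regularity beyond Lipschitz.
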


\begin{proof}
We recall the electric far field pattern representation in the physical optics approximation \eqref{EinftyKirchhoff} and observe that
\ben
&&\bE^{\infty}_{\text{po}}(-{\bf\hx}, -\bd, \bp, k)\cr
&=& -2ik{\bf\hx}\times\int_{\pa D_{+}(\bd)} [\bm{\nu}(\by)\times (\bd\times\bp)]\times {\bf\hx} \, e^{-ik(\bd-{\bf\hx})\cdot\by}ds(\by), \quad {\bf\hx}, \bd, \bp\in\bbS, k\in K.\quad
\enn
Combining this with \eqref{EinftyKirchhoff}, we deduce that
\be\label{Uinfty}
&&U^{\infty}({\bf\hx}, \bd, \bp, k) \nonumber \\[0.75ex]
&:=&\bE^{\infty}_{\text{po}}({\bf\hx},\bd,\bp,k) + \ov{\bE^{\infty}_{\text{po}}(-{\bf\hx}, -\bd, \bp, k)}\cr
&=& 2ik{\bf\hx}\times\int_{\pa D} [\bm{\nu}(\by)\times (\bd\times\bp)]\times {\bf\hx} \, e^{ik(\bd-{\bf\hx})\cdot\by}ds(\by)\cr
&=& 2ik{\bf\hx}\times \left( \int_{D} \curl_\by [(\bd\times\bp) e^{ik(\bd-{\bf\hx})\cdot\by}]d(\by)\times {\bf\hx} \right) \cr
&=& -2k^2 \, A({\bf\hx}, \bd, \bp) \int_{D} e^{ik(\bd-{\bf\hx})\cdot\by}d(\by), \quad {\bf\hx}, \bd, \bp\in\bbS, k\in K.\quad
\en

For the two fixed directions ${\bf\hx}$, $\bd \in \Theta_l$, we define
\ben
\Pi_{\alpha}:=\{\by\in \R^3 | (\bd-{\bf\hx})\cdot\by+\alpha=0\}
\enn
to be a hyperplane with normal $\phi$ given by \eqref{phi}.
Then we have
\be\label{UinfFourier}
U^{\infty}({\bf\hx}, \bd, \bp, k) = -2k^2A({\bf\hx}, \bd, \bp) \int_{\R}\hat{\chi}(\alpha) e^{-ik\alpha}d\alpha, \quad {\bf\hx}, \bd, \bp\in\bbS, k\in K.\quad
\en
where
\be
\hat{\chi}(\alpha):=\int_{\Pi_\alpha}\chi(y)ds(y) \, ,
\en
and $\chi$ denotes the characteristic function of $D$. As $D$ is compact, the right hand side of \eqref{UinfFourier} is an analytic function with respect to $k$ for all $k \in \R$.  Thus, by analyticity, we can extend right hand side of \eqref{UinfFourier} and hence the data $U^{\infty}({\bf\hx}, \bd, \bp, k)$ analytically to all $k\in\R$.

Applying the Fourier transform, the equality \eqref{UinfFourier} implies that $\hat{\chi}$ can be uniquely determined by $U^{\infty}({\bf\hx}, \bd, \bp, k)$ and $A({\bf\hx}, \bd, \bp)$.
Note that $S_D(\phi)$ defined by \eqref{S_D_def} satisfies
\ben
S_D(\phi)=\ov{\bigcup_{\alpha\in \R} \{\Pi_{\alpha}| \hat{\chi}(\alpha)\neq 0\}},
\enn
which implies that the strip $S_D(\phi)$ is uniquely determined by $\hat{\chi}$, and thus by the generalized backscattering data $U^{\infty}({\bf\hx}, \bd, \bp, k)$ for all $k\in K$ and three fixed directions $\bd$, $\bp$, ${\bf\hx} \in \Theta_l$. The proof is complete.
\end{proof}

The assumption $A({\bf\hx}, \bd, \bp)\neq {\bf0}$ implies that $\bf\hx\neq \bd$, thus the unit vector $\phi$ given in \eqref{phi} is well defined. Actually, for the classical backscattering case, i.e., $\bf\hx = -\bd$, we have
\be\label{Aback}
A({\bf\hx}, \bd, \bp)= {\bf\hx}\times \left( [(\bd-{\bf\hx})\times(\bd\times\bp)]\times {\bf\hx} \right) = -2\bp
\en
by using the fact that $\bd\cdot\bp = 0$. Clearly, $A({\bf\hx}, \bd, \bp)\neq \bf 0$ in this case.
Furthermore, as indicated in the following corollary, less data are needed to uniquely determine the strip.\\

\begin{corollary}
For one fixed incident direction $\bd\in \bbS$, one fixed polarization $\bp\in \bbS$ satisfying $\bp\cdot\bd = 0$, we choose a unit vector ${\bf e}\in \bbS$ such that ${\bf e}\cdot\bp\neq 0$. Then the $\bd$-strip $S_D(\bd)$ is uniquely determined by the backscattering data ${\bf e}\cdot\bE^{\infty}_{\text{po}}(-\bd,\bd,\bp,k)$ and ${\bf e}\cdot\bE^{\infty}_{\text{po}}(\bd,-\bd,\bp,k)$ for all $k\in K$.
\end{corollary}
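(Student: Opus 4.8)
The plan is to reduce the corollary to the scalar content of Theorem~\ref{Strip-uniqueness} by dotting the vector far field identities with the fixed vector ${\bf e}$, thereby isolating a single scalar function of $k$ from which the Fourier argument can still be run. First I would specialize the backscattering situation ${\bf\hx}=-\bd$: from \eqref{Aback} we have $A(-\bd,\bd,\bp)=-2\bp$, and $\phi=(\bd-(-\bd))/|\bd-(-\bd)|=\bd$, so the strip $S_D(\phi)$ of the theorem is exactly $S_D(\bd)$. Then, following the derivation of \eqref{Uinfty} with ${\bf\hx}=-\bd$, I would form the combined quantity
\ben
U^{\infty}(-\bd,\bd,\bp,k):=\bE^{\infty}_{\text{po}}(-\bd,\bd,\bp,k)+\ov{\bE^{\infty}_{\text{po}}(\bd,-\bd,\bp,k)}
= -2k^2 \, A(-\bd,\bd,\bp)\int_{D} e^{2ik\bd\cdot\by}\,d(\by)
= 4k^2 \, \bp \int_{D} e^{2ik\bd\cdot\by}\,d(\by),
\enn
which is the same identity as in the theorem's proof, just evaluated at the backscattering configuration.

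Next I would take the inner product with ${\bf e}$. Since ${\bf e}\cdot\bp\neq 0$ by hypothesis, and since complex conjugation commutes with the real scalar ${\bf e}\cdot(\cdot)$, we get
\ben
{\bf e}\cdot\bE^{\infty}_{\text{po}}(-\bd,\bd,\bp,k)+\ov{{\bf e}\cdot\bE^{\infty}_{\text{po}}(\bd,-\bd,\bp,k)}
= 4k^2 \, ({\bf e}\cdot\bp) \int_{D} e^{2ik\bd\cdot\by}\,d(\by).
\enn
Thus the two scalar data ${\bf e}\cdot\bE^{\infty}_{\text{po}}(-\bd,\bd,\bp,k)$ and ${\bf e}\cdot\bE^{\infty}_{\text{po}}(\bd,-\bd,\bp,k)$, known for all $k\in K$, together determine the left-hand side, and hence determine $\int_{D} e^{2ik\bd\cdot\by}\,d(\by)$ for all $k\in K$ up to the known nonzero constant $4k^2({\bf e}\cdot\bp)$. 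From here the argument is verbatim that of the theorem: introduce the hyperplanes $\Pi_\alpha:=\{\by\in\R^3\,|\,2\bd\cdot\by+\alpha=0\}$ (or with the normalization $\bd\cdot\by+\alpha=0$, adjusting constants), write $\int_D e^{2ik\bd\cdot\by}\,d(\by)=\int_\R \hat\chi(\alpha)e^{-ik\alpha}\,d\alpha$ with $\hat\chi(\alpha)=\int_{\Pi_\alpha}\chi(\by)\,ds(\by)$, observe that the right-hand side is entire in $k$ so the data extend analytically to all $k\in\R$, invert the Fourier transform to recover $\hat\chi$, and conclude $S_D(\bd)=\ov{\bigcup_{\alpha}\{\Pi_\alpha\,|\,\hat\chi(\alpha)\neq 0\}}$.

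The only point requiring a little care — and the one I would call the main obstacle, though it is minor — is checking that taking the scalar component ${\bf e}\cdot(\cdot)$ does not destroy information: this is precisely where ${\bf e}\cdot\bp\neq 0$ enters, since the combined vector datum is a scalar multiple of the fixed vector $\bp$, so any ${\bf e}$ not orthogonal to $\bp$ retains the full scalar function $\int_D e^{2ik\bd\cdot\by}\,d(\by)$. One should also note that such an ${\bf e}\in\bbS$ always exists (the orthogonal complement of $\bp$ is a proper subspace), and that only the two scalar measurements — rather than two full tangential vector far field patterns — are needed, which is the sense in which ``less data'' suffice. The rest is a direct transcription of the proof of Theorem~\ref{Strip-uniqueness}.
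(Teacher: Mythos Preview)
Your proposal is correct and is precisely the argument the paper intends: the corollary is stated without proof because it is the immediate specialization of Theorem~\ref{Strip-uniqueness} to ${\bf\hx}=-\bd$, together with the observation (which you make explicit) that the combined datum $U^\infty(-\bd,\bd,\bp,k)=4k^2\bp\int_D e^{2ik\bd\cdot\by}\,d(\by)$ is a scalar multiple of $\bp$, so dotting with any ${\bf e}$ satisfying ${\bf e}\cdot\bp\neq0$ loses nothing. Your handling of the conjugation, the analytic continuation, and the Fourier inversion all match the paper's proof of the theorem verbatim.
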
 \\

Note that to determine the $\phi$-strip $S_D(\phi)$, we actually used one pair of directions $(\bf\hx, \bd)$ and $(-\bf\hx, -\bd)$.
Define
\be\label{phij}
\phi_j:=\frac{\bd_j-\bf\hx_j}{|\bd_j-\bf\hx_j|}, \quad j =1, 2, 3.
\en
As a corollary of Theorem \ref{Strip-uniqueness} we immediately have a uniqueness result for obstacle support with at most three pairs of opposite observation directions.\\

\begin{corollary}
Let $({\bf\hx}_j, \bd_j)$, \, $j=1, 2, 3$ be three pairs of directions such that the assumption \eqref{Aassumption} holds and the corresponding three unit directions $\phi_j$ given by \eqref{phij} are linearly independent. Take $\bp_j$ such that $\bp_j\cdot\bd_j =0$. Then the smallest parallel hexahedron containing the obstacle with normals $\pm\phi_j$ can be uniquely determined by the generalized backscattering data
$\bE^{\infty}_{\text{po}}({\bf\hx}_j,\bd_j,\bp_j,k)$ and $\bE^{\infty}_{\text{po}}(-{\bf\hx}_j,-\bd_j,\bp_j,k)$, $j=1$, $2$, $3$, for all $k\in K$.
\end{corollary}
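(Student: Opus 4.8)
The plan is to obtain the result as a threefold application of Theorem~\ref{Strip-uniqueness}, followed by an elementary argument identifying an intersection of three slabs with the desired hexahedron. For each index $j=1,2,3$, the triple $({\bf\hx}_j,\bd_j,\bp_j)$ satisfies the non-degeneracy hypothesis \eqref{Aassumption} by assumption, so Theorem~\ref{Strip-uniqueness} applies and shows that the $\phi_j$-strip $S_D(\phi_j)$, with $\phi_j$ as in \eqref{phij}, is uniquely determined by the generalized backscattering data $\bE^{\infty}_{\text{po}}({\bf\hx}_j,\bd_j,\bp_j,k)$ and $\bE^{\infty}_{\text{po}}(-{\bf\hx}_j,-\bd_j,\bp_j,k)$ for all $k\in K$. (Each $\phi_j$ is well defined since \eqref{Aassumption} forces ${\bf\hx}_j\neq\bd_j$, as noted after the theorem.) Consequently, from the full data set listed in the corollary, all three strips $S_D(\phi_1)$, $S_D(\phi_2)$, $S_D(\phi_3)$ are recovered, and hence so is their intersection $P:=\bigcap_{j=1}^{3}S_D(\phi_j)$.

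The second step is to recognize $P$ as a bounded parallelepiped. Since $\overline{D}\subseteq S_D(\phi_j)$ for every $j$, we have $\overline{D}\subseteq P$; and being an intersection of three slabs, $P$ is a closed convex polyhedron all of whose bounding hyperplanes have normals $\pm\phi_1,\pm\phi_2,\pm\phi_3$. This is precisely where the linear independence of $\phi_1,\phi_2,\phi_3$ is needed: the linear map $T:\R^3\to\R^3$, $Ty:=(y\cdot\phi_1,\,y\cdot\phi_2,\,y\cdot\phi_3)$, is then an isomorphism, and by \eqref{S_D_def} one has $P=T^{-1}(B)$ for the box $B:=\prod_{j=1}^{3}\bigl[\inf_{z\in D}z\cdot\phi_j,\ \sup_{z\in D}z\cdot\phi_j\bigr]$. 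Thus $P$ is the image of a bounded box under a linear isomorphism, i.e. a bounded parallel hexahedron with face normals $\pm\phi_j$.

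It remains to verify minimality. Each factor $S_D(\phi_j)$ is, by its definition \eqref{S_D_def}, the \emph{smallest} strip with normals $\pm\phi_j$ containing $\overline{D}$. If $Q$ is any parallel hexahedron with face normals $\pm\phi_j$ such that $\overline{D}\subseteq Q$, then $Q$ itself is an intersection of three slabs with these normals, and each of those slabs contains the corresponding minimal slab $S_D(\phi_j)$; hence $Q\supseteq\bigcap_{j=1}^{3}S_D(\phi_j)=P$. Therefore $P$ is the smallest parallel hexahedron of the prescribed type containing $\overline{D}$, and by the first step it is uniquely determined by the stated data, which proves the corollary.

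I do not expect a genuine obstacle here; the corollary is essentially bookkeeping built on Theorem~\ref{Strip-uniqueness}. The only point requiring care is the explicit use of linear independence in the second step: without it the intersection of the three slabs need not be bounded, so the phrase ``parallel hexahedron'' would not be justified, and I would make sure to flag exactly where that hypothesis enters.
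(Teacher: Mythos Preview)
Your proposal is correct and matches the paper's approach: the paper states this result as an immediate corollary of Theorem~\ref{Strip-uniqueness} without writing out a proof, and your argument is exactly the expected three-fold application of that theorem together with the elementary geometric identification of the intersection of the three minimal slabs as the smallest parallel hexahedron. Your explicit handling of where linear independence enters (boundedness of the intersection via the isomorphism $T$) is a welcome clarification of a point the paper leaves tacit.
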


\section{A simple sampling method}\label{Sec-DSM}

We consider the following indicator

\be\label{indicator}
I(\bz) := \sum_{\bd\in\Theta_l}I_{\bd}(\bz):=\sum_{\bd\in\Theta_l}\left|\int_{K}\frac{1}{4k^2}\bp\cdot U^{\infty}({\bf\hx}, \bd, \bp, k)e^{-ik(\bd-{\bf\hx})\cdot\bz}dk\right|^2, \quad \bz\in\R^3 \, , \qquad
\en
where $U^{\infty}({\bf\hx}, \bd, \bp, k)=\bE^{\infty}({\bf\hx},\bd,\bp,k) + \ov{\bE^{\infty}(-{\bf\hx}, -\bd, \bp, k)}$.
Of particular interest is the backscattering case, i.e., $\bf\hx=-\bd$. Then we can combine \eqref{Uinfty} and \eqref{Aback} to obtain
\ben
U^{\infty}(-\bd, \bd, \bp, k) = 4 k^2 \bp \int_{D} e^{2ik\bd \cdot\by}d(\by).
\enn
Inserting this into \eqref{indicator} and interchanging the order of integration we observe that
\be\label{Ibd}
I_{\bd}(\bz)
&=&\left|\int_{K}\int_{D} e^{2ik\bd \cdot\by}d(\by)e^{-2ik\bd\cdot\bz}dk\right|^2\cr
&=&\left|\int_{D} \int_{K}e^{2ik\bd \cdot(\by-\bz)}dk d(\by)\right|^2, \quad \bz\in\R^3 \, .
\en
It is obvious that
\ben
I_{\bd}(\bz + \alpha \bd^{\perp}) = I_{\bd}(\bz)
\enn
for any $\alpha\in\R$ and any vector $\bd^{\perp}$ satisfying $\bd\cdot \bd^{\perp}=0$.
We rewrite the representation \eqref{Ibd} in the form
\ben
I_{\bd}(\bz)=\left|\int_{D} \int_{K}e^{2ik\bd \cdot(\by-\bz)}dk \, d(\by)\right|^2, \quad \bz\in\R^3.
\enn
In particular, for $\bz\in\R^3\ba\ov{S_{D}(\bd)}$, we find
\ben
I_{\bd}(\bz)=\left|\int_{D}\frac{e^{2ik\bd \cdot(\by-\bz)}|_{K}}{2\bd \cdot(\by-\bz)} d(\by)\right|^2, \quad \bz\in\R^3\ba\ov{S_{D}(\bd)},
\enn
where the numerator $e^{2ik\bd \cdot(\by-\bz)}|_{K}:=e^{2ik_{max}\bd \cdot(\by-\bz)}- e^{2ik_{min}\bd \cdot(\by-\bz)}$ is clearly bounded. Thus it is expected that
the indicator $I_{\bd}(\bz)$ decays like $1/|\bd \cdot(\by-\bz)|^2$ as the sampling point $\bz$ moves away from the strip $S_{D}(\bd)$.

\section{Numerical examples}\label{Sec-Num}

In this section, we present a variety of numerical examples in three dimensions to illustrate the applicability, effectiveness and robustness of our sampling methods with broadband sparse data.

The data for the numerical experiments were generated using the boundary element library \texttt{bempp} (\url{https://bempp.com} \cite{SCROGG2017, SmiArrBet2015}). The boundary integral equation to be solved is derived as in \cite{ConDem2002, SCROGG2017} including a stabilization applied to the electric field operator. We repeat the main points here for self-consistency of the paper. We use the electric and magnetic potentials, defined for $\bm{\varphi} \in H^{-1/2}( \DDiv, \partial D)$ (see \cite{BufCos2002} for a precise definition of this and other relevant trace spaces),
\begin{align*}
  \EP \bm{\varphi}(\bx) & = \I k \, \int_{\partial D} \Phi(\bx,\by) \, \bm{\varphi}(\by) \, \D s(\by)  - \frac{1}{\I k} \Dgrad \int_{\partial D} \Phi(\bx,\by) \, \DDiv_{\partial D} \bm{\varphi}(\by) \, \D s(\by) \, , \\
  \MP \bm{\varphi}(\bx) & = \Dcurl \int_{\partial D} \Phi(\bx,\by) \, \bm{\varphi}(\by) \, \D s(\by) \, ,
\end{align*}
and the traces
\[
  \gamma_t \bE = \bE \times \bm{\nu} \, , \qquad \gamma_N \bE = \frac{1}{\I k} \, \gamma_t \Dcurl \bE \qquad \text{on } \partial D \, .
\]
Superscripts $\cdot^\pm$ will be used to indicate whether these traces are taken from outside or inside $\partial D$, respectively. If no superscript is given, a trace from outside $\partial D$ is understood.

In this notation, the Stratton-Chu formula for a radiating electric field outside of $D$ is
\[
  \bE^s = - \MP \gamma_t \bE^s - \EP \gamma_N \bE^s \, .
\]
Applying the trace operators to the potentials and averaging defines the electromagnetic boundary integral operators
\[
   \mathcal{E} = \frac{1}{2} \left( \gamma_t^+ \EP + \gamma_t^- \EP \right) , \qquad
   \mathcal{H} = \frac{1}{2} \left( \gamma_t^+ \MP + \gamma_t^- \MP \right) .
\]
From the jump relations we obtain the equations
\[
  \gamma_t \EP = \mathcal{E} \, , \quad
  \gamma_N \EP = - \frac{1}{2} \, \mathcal{I} + \mathcal{H} \, , \quad
  \gamma_t \MP = - \frac{1}{2} \, \mathcal{I} + \mathcal{H} \, , \quad
  \gamma_N \MP = - \mathcal{E}
\]
 for the exterior traces. Applying the trace operators to the Stratton-Chu formula gives the exterior Calderon projector,
\[
  \mathcal{C}^+ \begin{pmatrix} \gamma_t \bE^s \\ \gamma_N \bE^s  \end{pmatrix}
  =  \begin{pmatrix} \gamma_t \bE^s \\ \gamma_N \bE^s  \end{pmatrix}
 \qquad \text{with} \qquad
 \mathcal{C}^+
 = \frac{1}{2} \, \mathcal{I} - \mathcal{A}
 = \frac{1}{2} \, \mathcal{I} - \begin{pmatrix} \mathcal{H}  & \mathcal{E} \\ -\mathcal{E} & \mathcal{H} \end{pmatrix} .
\]
$\mathcal{A}$ is also called the multitrace operator.

The two rows of the Calderon projector equation with $\gamma_N \bE^s$ as the unknown give the electric and magnetic field boundary integral equations (EFIE and MFIE), respectively. We apply a regularizing operator $\mathcal{R}$ to the EFIE, which is  the operator $\mathcal{E}$ with $k$ replaced by $\I k$, and add both equations to obtain the combined field boundary integral equation (CFIE)
\[
   \left( \frac{1}{2} \, \mathcal{I} + \mathcal{H} - \mathcal{R E} \right) \gamma_N \bE^s = - \left( \mathcal{R} \left[ \frac{1}{2} \, \mathcal{I} + \mathcal{H} \right] + \mathcal{E} \right) \gamma_t \bE^i \, .
\]

\begin{center}
  \begin{tabular}{c@{\qquad\qquad}c}
    \includegraphics[width=0.425\linewidth]{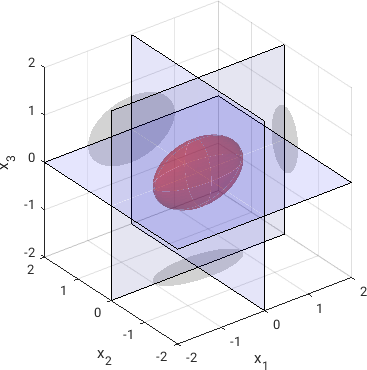}
    & \includegraphics[width=0.425\linewidth]{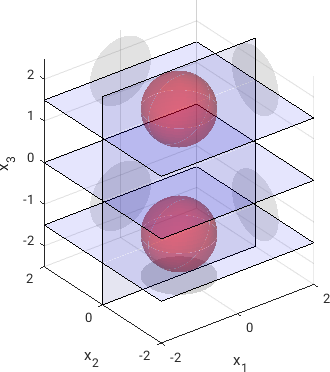} \\[1ex]
    (a) & (b)
  \end{tabular}
  \captionof{figure}{Obstacles used in the numerical experiments together with planes, on which the indicator is plotted: (a) ellipsoid with planes $x_1 = 0$, $x_2 = 0$, $x_3 = 0$, (b) two spheres with planes $x_3 = -1.5$, $x_3 = 0$, $x_3 = 1.5$, $x_2 = 0$.}
  \label{fig:settings}
\end{center}

\bigskip

This is the boundary integral equation solved using \texttt{bempp}. The scattered field is then obtained from Stratton-Chu as
\[
 \bE^s = \MP \gamma_t \bE^i - \EP \gamma_N \bE^s \, ,
\]
and the far fields are obtained from the corresponding asymptotic expansion.

We will consider two settings for scattering problems, one for a convex and one for a non-convex obstacle. In the first setting, the obstacle is an ellipsoid centered at the origin with half axis $1.0$, $0.4$ and $0.7$, respectively. In the second setting, the obstacle is made up of two perfectly conducting balls of radius $0.8$ with center at $(0,0,\pm 1.5)$, respectively. Both settings, together with the planes in which the indicator function will be plotted subsequently, are displayed in Figure \ref{fig:settings}.

In both settings, incident plane waves as in \eqref{EiHi} for wave number $k \in [10,20]$ are considered. This corresponds to wave lengths ranging from 0.6281 down to 0.3142. For the evaluation of the indicator function $I_{\bd}$ given in \eqref{indicator} we approximate the integral by the composite trapezoidal rule with 40 uniformly spaced wave numbers in this interval. From the point of view that high frequency asymptotics are the basis of the derivation of our method, it is perhaps surprising that we use such moderately sized wave numbers. However, the quality of the reconstructions is surprisingly good, as we will show below.

The boundaries of both obstacles were discretized using a triangulation with mesh size $h = 0.05$. This corresponds to $6136$ elements for the ellipsoid and $16686$ elements for the two spheres. The boundary element spaces used for solving the integral equations are spanned by Rao-Wilton-Glisson basis functions \cite{RWG} and have $9204$ and $25029$ degrees of freedom respectively. Using H-matrix compression, about 16 Gb of memory was required for solving the scattering problems for the ellipsoid and 38 Gb for the two sphere obstacle. The linear systems were solved using GMRES which always converged in 16--18 iterations independent of the incident direction and the wave number.

\begin{center}
  \begin{tabular}{c@{\quad}c@{\quad}c}
    \includegraphics[width=0.31\linewidth]{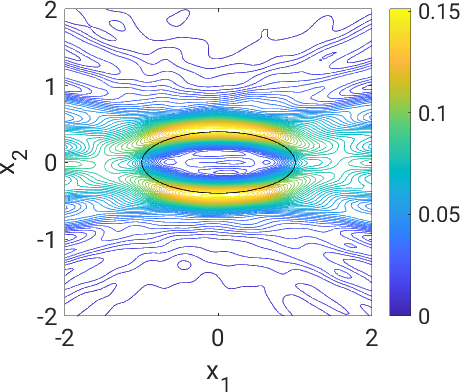}
    & \includegraphics[width=0.31\linewidth]{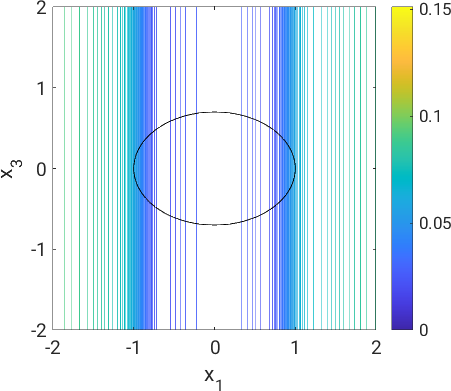}
    & \includegraphics[width=0.31\linewidth]{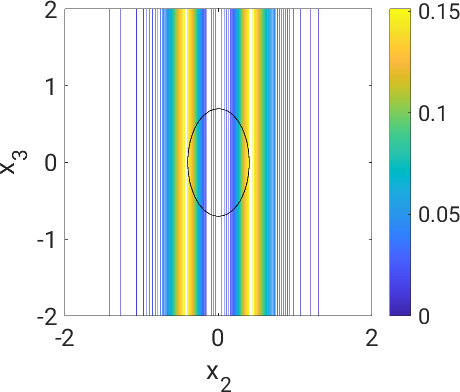} \\[1ex]
    (a) & (b) & (c)
  \end{tabular}
  \vspace{-1ex}

 \captionof{figure}{Plot of $I(\bz)$, ellipsoid obstacle, 40 incident directions in $(x_1,x_2)$-plane; (a) $x_3 = 0$; (b) $x_2 = 0$; (c) $x_1 = 0$.}
 \label{fig:ellipsoid_xy}
\end{center}

\bigskip

\begin{center}
  \begin{tabular}{c@{\quad}c@{\quad}c}
    \includegraphics[width=0.31\linewidth]{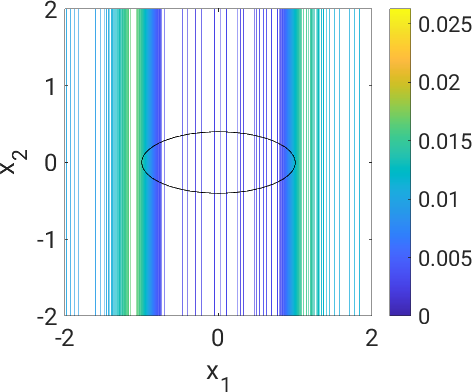}
    & \includegraphics[width=0.31\linewidth]{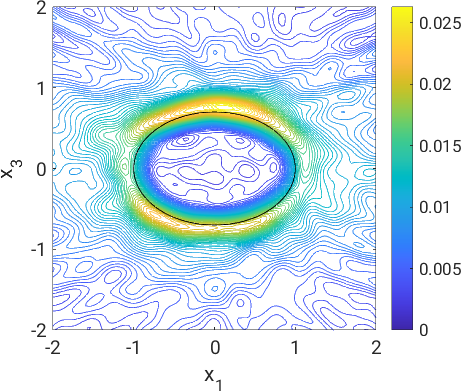}
    & \includegraphics[width=0.31\linewidth]{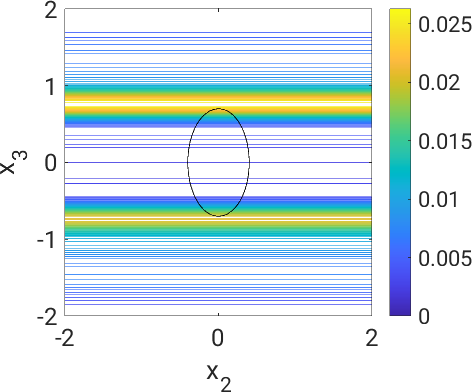} \\[1ex]
    (a) & (b) & (c)
  \end{tabular}
  \vspace{-1ex}

 \captionof{figure}{Plot of $I(\bz)$, ellipsoid obstacle, 40 incident directions in $(x_1,x_3)$-plane; (a) $x_3 = 0$; (b) $x_2 = 0$; (c) $x_1 = 0$.}
 \label{fig:ellipsoid_xz}
\end{center}

\bigskip

\begin{center}
  \begin{tabular}{c@{\quad}c@{\quad}c}
    \includegraphics[width=0.31\linewidth]{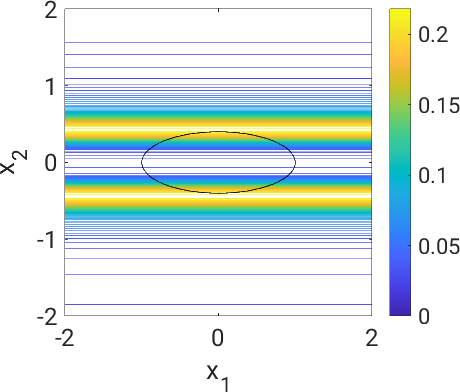}
    & \includegraphics[width=0.31\linewidth]{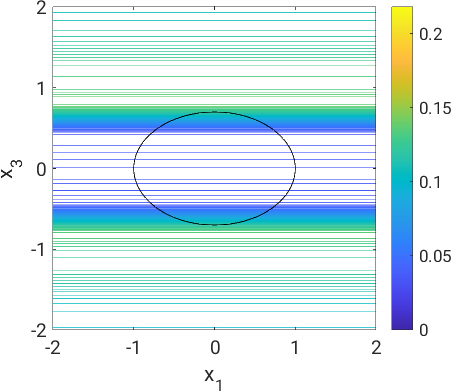}
    & \includegraphics[width=0.31\linewidth]{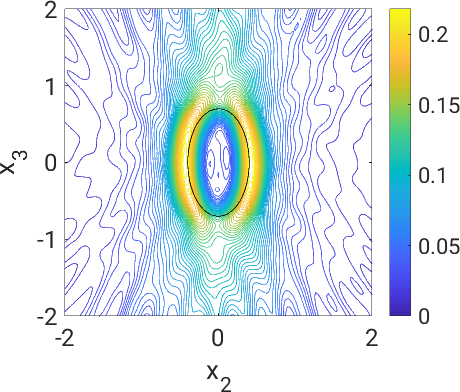} \\[1ex]
    (a) & (b) & (c)
  \end{tabular}
  \vspace{-1ex}

 \captionof{figure}{Plot of $I(\bz)$, ellipsoid obstacle, 40 incident directions in $(x_2,x_3)$-plane; (a) $x_3 = 0$; (b) $x_2 = 0$; (c) $x_1 = 0$.}
 \label{fig:ellipsoid_yz}
\end{center}

\bigskip
\bigskip

For the plots of the indicator function, we only present the backscattering case. Before computing the indicator function, random noise was added to the data. For each scattering problem, the maximum amplitude of the corresponding far field pattern was computed. The data for this scattering problem was then perturbed by a uniformly distributed random complex vector of Euclidean norm at most $10\%$ of this maximum amplitude.

\newpage

\begin{center}
  \begin{tabular}{c@{\qquad}c}
    \includegraphics[width=0.32\linewidth]{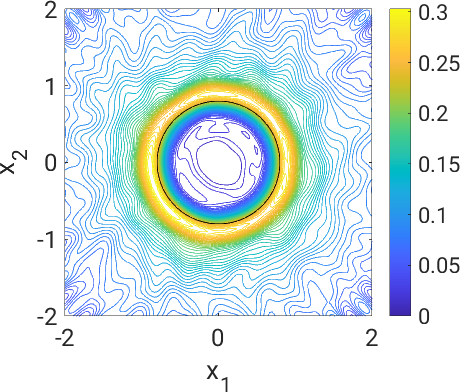}
    & \includegraphics[width=0.32\linewidth]{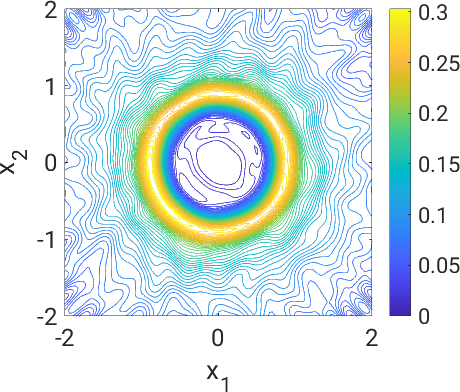} \\
    (a) & (b) \\[1ex]
    \raisebox{6ex}{\includegraphics[width=0.32\linewidth]{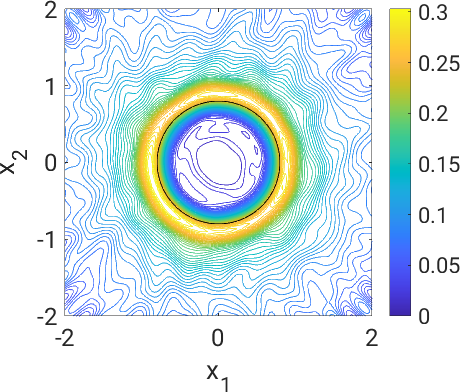}}
    & \includegraphics[width=0.32\linewidth]{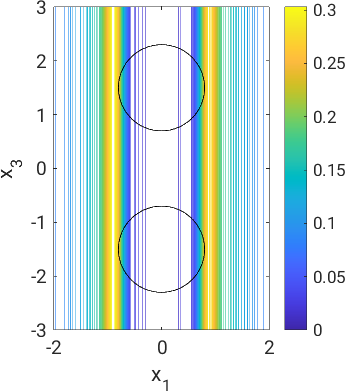} \\
    (c) & (d)
  \end{tabular}
  \vspace{-2ex}

 \captionof{figure}{Plot of $I(\bz)$, two spheres obstacle, 40 incident directions in $(x_1,x_2)$-plane; (a) $x_3 = -1.5$; (b) $x_3 = 0$; (c) $x_3 = 1.5$; (d) $x_2 = 0$.}
 \label{fig:twospheres_xy}
\end{center}

\bigskip
\bigskip

In the first set of experiments, the directions of incidence lie in one of the coordinate planes. Our theory makes no special assumption on the polarization other than that in the definition of $U^\infty$, the polarization for the opposite directions must be identical. In fact, \eqref{Ibd} shows that the indicator $I_{\bd}$ is independent of the choice of $\bp$. In the results presented here, $\bp$ is chosen differently for each direction of incidence. Other experiments which were conducted indeed show that there is no visible difference in the indicator function when a different choice for $\bp$ is made, for example the normal of the plane in which the directions of incidence lie.

Results for the ellipsoid are displayed in Figures \ref{fig:ellipsoid_xy} -- \ref{fig:ellipsoid_yz} and for the two sphere obstacle in Figures \ref{fig:twospheres_xy} and \ref{fig:twospheres_xz}. The boundary of the obstacle is always indicated as a black line. In each case, 40 incident directions are used, distributed uniformly on the unit circle in the particular coordinate plane. We have not included the plot for the two spheres corresponding to Figure \ref{fig:ellipsoid_yz} as due to symmetry of the object and the orientation of the planes on which the indicator function is plotted, this plot adds nothing to what is displayed in Figure \ref{fig:twospheres_xz}. In each case we observe highest values of the indicator along relatively flat boundary parts that correspond with the boundary of the convex hull of the obstacle. Also, the highest values of the indicator function are located just outside the obstacle.

This effect is due to the superposition of the results for several directions. In Figure \ref{fig:ellipsoid_onedir} we show plots of $I_{\bd}(\bz)$, i.e. using just one direction $\bd$ (and its opposite). Here, the boundary of the strip between maximal values of $I_{\bd}(\bz)$ touches the obstacle.

\begin{center}
  \begin{tabular}{c@{\qquad}c}
    \includegraphics[width=0.32\linewidth]{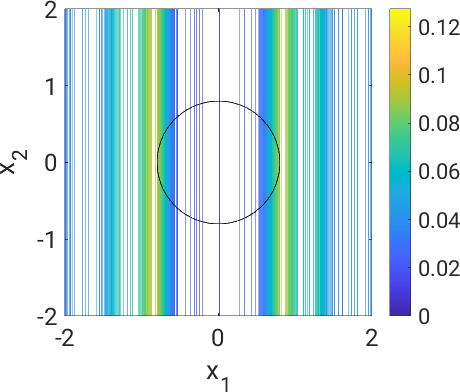}
    & \includegraphics[width=0.32\linewidth]{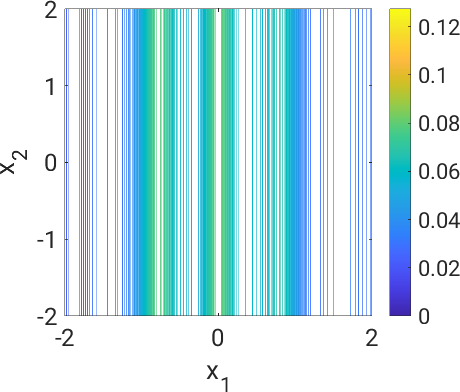} \\
    (a) & (b) \\[1ex]
    \raisebox{5ex}{\includegraphics[width=0.32\linewidth]{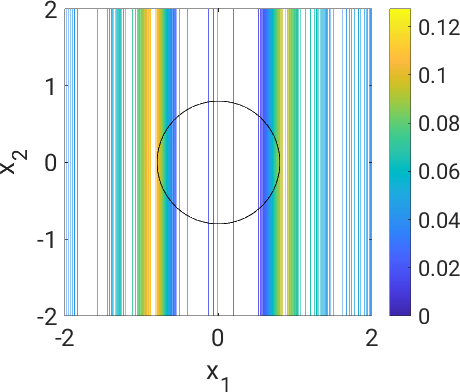}}
    & \includegraphics[width=0.32\linewidth]{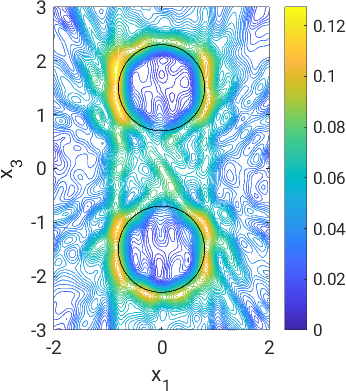} \\
    (c) & (d)
  \end{tabular}
  \vspace{-2ex}

 \captionof{figure}{Plot of $I(\bz)$, two spheres obstacle, 40 incident directions in $(x_1,x_3)$-plane; (a) $x_3 = -1.5$; (b) $x_3 = 0$; (c) $x_3 = 1.5$; (d) $x_2 = 0$.}
 \label{fig:twospheres_xz}
\end{center}

\bigskip\bigskip

\begin{center}
  \begin{tabular}{c@{\quad}c@{\quad}c}
    \includegraphics[width=0.31\linewidth]{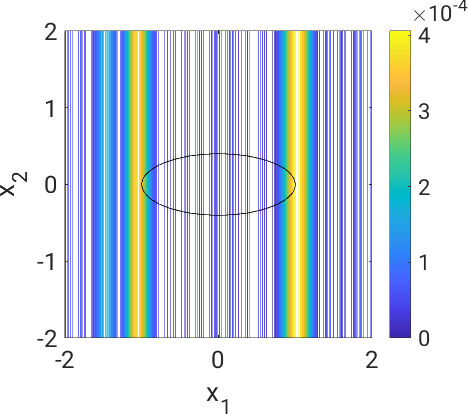}
    & \includegraphics[width=0.31\linewidth]{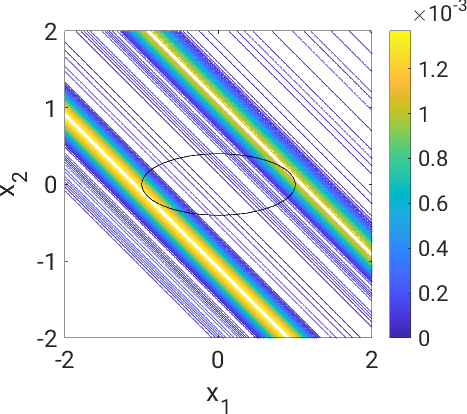}
    & \includegraphics[width=0.31\linewidth]{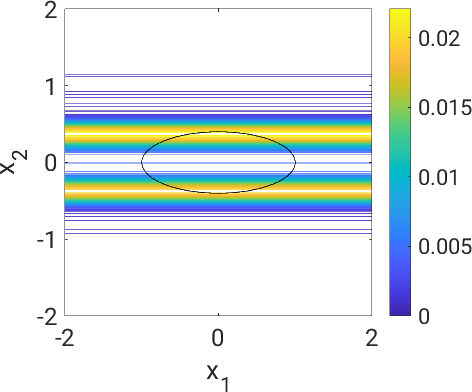} \\[1ex]
    (a) & (b) & (c)
  \end{tabular}
  \vspace{-1ex}

 \captionof{figure}{Plot of $I_{\bd}(\bz)$, ellipsoid obstacle, 1 incident direction; (a) $\bd = (1,0,0)^\top$; (b) $\bd = (1,1,0)^\top / \sqrt{2}$; (c) $\bd = (0,1,0)^\top$.}
 \label{fig:ellipsoid_onedir}
\end{center}

\bigskip
\bigskip

The plots in Figure \ref{fig:ellipsoid_onedir} also show that for different directions of incidence and observation the maximal values of the indicator function $I_{\bd}$ can be quite of quite diverse magnitude. This observation let us to experiment with variations of the indicator function in which contributions for different directions are scaled differently and are hence more balanced overall. Define
\[
  I^p (\bz) = \sum_{\bd\in\Theta_l} \frac{ I_{\bd}(\bz) }{ \left\| I_{\bd} \right\|_\infty^p} \, , \qquad \bz\in\R^3 \, , \quad p \in \left\lbrace 0, \frac{1}{2}, 1 \right\rbrace .
\]

\begin{center}
  \begin{tabular}{c@{\quad}c@{\quad}c}
    \includegraphics[width=0.31\linewidth]{plots/ellipsoid_mxwl_in_plane_xy_z_0.png}
    & \includegraphics[width=0.31\linewidth]{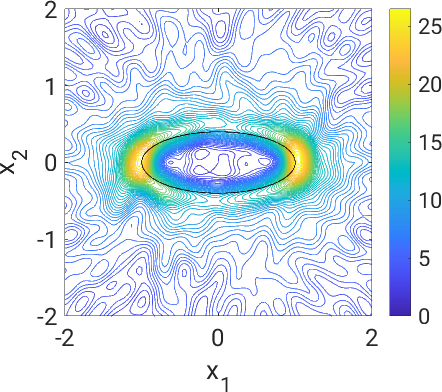}
    & \includegraphics[width=0.31\linewidth]{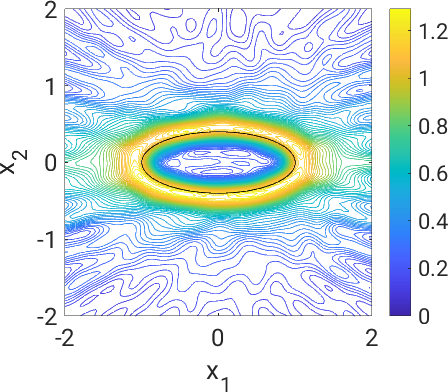} \\[1ex]
    (a) $I^0(\bz)$ & (b) $I^1(\bz)$ & (c) $I^{1/2}(\bz)$
  \end{tabular}
  \vspace{-1ex}

   \captionof{figure}{The plot from Figure \ref{fig:ellipsoid_xy} (a) with different indicators.}
  \label{fig:ellipsoid_xy_compare}
\end{center}

\bigskip

\begin{center}
  \begin{tabular}{c@{\quad}c@{\quad}c}
    \includegraphics[width=0.31\linewidth]{plots/ellipsoid_mxwl_in_plane_xz_y_0.png}
    & \includegraphics[width=0.31\linewidth]{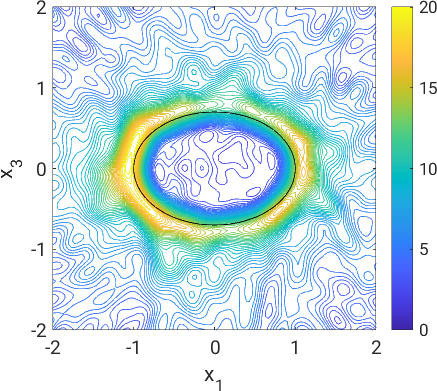}
    & \includegraphics[width=0.31\linewidth]{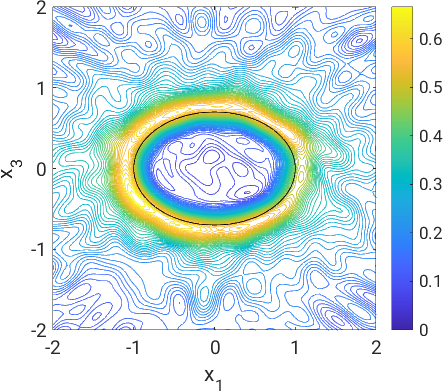} \\[1ex]
    (a) $I^0(\bz)$ & (b) $I^1(\bz)$ & (c) $I^{1/2}(\bz)$
  \end{tabular}
  \vspace{-1ex}

   \captionof{figure}{The plot from Figure \ref{fig:ellipsoid_xz} (b) with different indicators.}
  \label{fig:ellipsoid_xz_compare}
\end{center}

\bigskip

\begin{center}
  \begin{tabular}{c@{\quad}c@{\quad}c}
    \includegraphics[width=0.31\linewidth]{plots/ellipsoid_mxwl_in_plane_yz_x_0.png}
    & \includegraphics[width=0.31\linewidth]{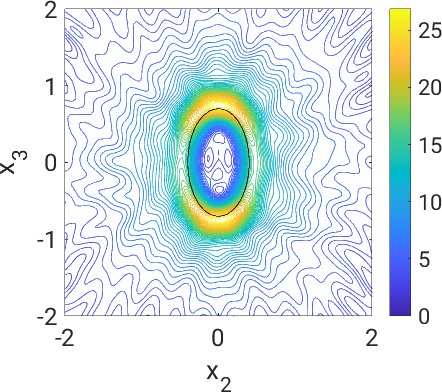}
    & \includegraphics[width=0.31\linewidth]{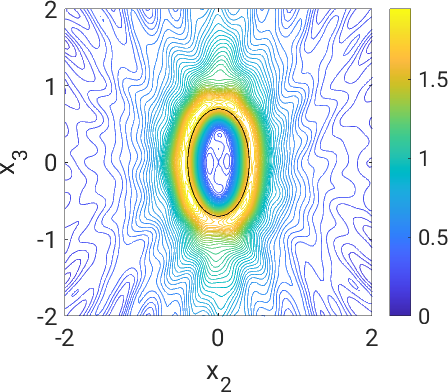} \\[1ex]
    (a) $I^0(\bz)$ & (b) $I^1(\bz)$ & (c) $I^{1/2}(\bz)$
  \end{tabular}
  \vspace{-1ex}

   \captionof{figure}{The plot from Figure \ref{fig:ellipsoid_yz} (c) with different indicators.}
  \label{fig:ellipsoid_yz_compare}
\end{center}

\bigskip

The choice $p = 0$ corresponds to our original indicator. We compare the effect of different scalings for the case of the ellipsoid obstacle in Figures \ref{fig:ellipsoid_xy_compare} -- \ref{fig:ellipsoid_yz_compare}. It can be clearly seen that the plots of $I^0$ and $I^1$ complement each other while $I^{1/2}$ gives the best overall result. For the non-convex obstacle consisting of two spheres we also obtain a much improved reconstruction of the complete boundary using $I^{1/2}$ compared to $I^0$, as can be seen in Figure \ref{fig:twospheres_xz_scaled}.

\newpage

\begin{minipage}{0.38\linewidth}
   \begin{center}
    \includegraphics[width=0.84\linewidth]{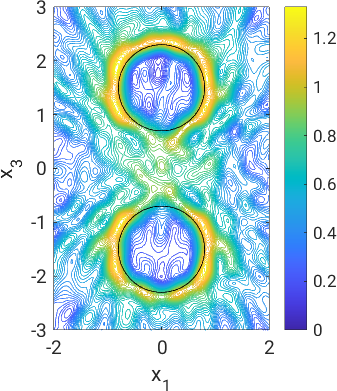}

    \captionof{figure}{Figure \ref{fig:twospheres_xz} (d) using $I^{1/2}$.}
    \label{fig:twospheres_xz_scaled}
   \end{center}
\end{minipage}
\hfill
\begin{minipage}{0.38\linewidth}
   \begin{center}
    \includegraphics[width=0.95\linewidth]{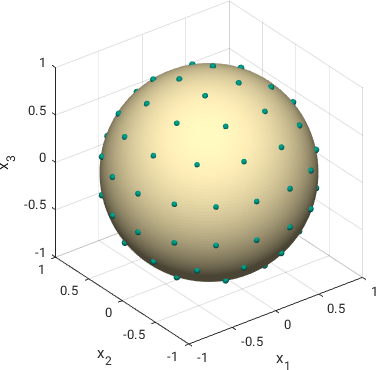}

    \captionof{figure}{80 uniformly distributed points on the unit sphere used as directions of incidence/observation}
    \label{fig:spherical_directions}
   \end{center}
\end{minipage}

\bigskip \bigskip

\begin{center}
  \begin{tabular}{c@{\quad}c@{\quad}c}
    \includegraphics[width=0.31\linewidth]{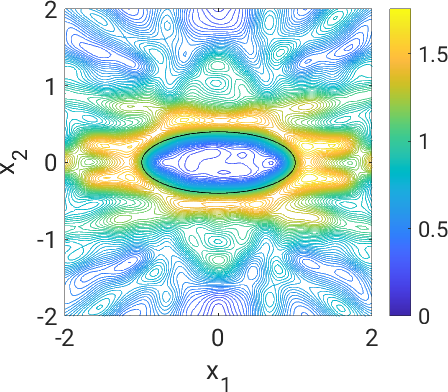}
    & \includegraphics[width=0.31\linewidth]{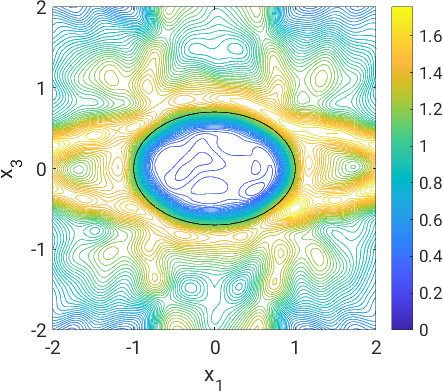}
    & \includegraphics[width=0.31\linewidth]{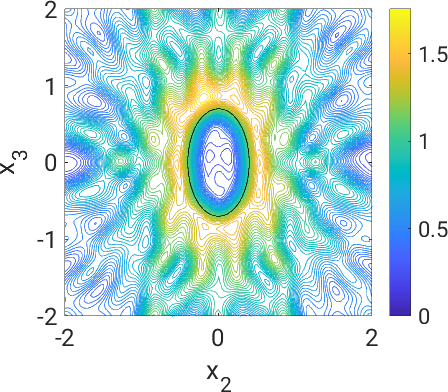} \\[1ex]
    (a) & (b) & (c)
  \end{tabular}
  \vspace{-1ex}

 \captionof{figure}{Plot of $I^{1/2}(\bz)$, ellipsoid obstacle, 80 incident directions in uniformly distributed over the unit sphere; (a) $x_3 = 0$; (b) $x_2 = 0$; (c) $x_1 = 0$.}
 \label{fig:ellipsoid_uniform}
\end{center}

\bigskip \bigskip

Finally, we also computed backscattering data for both obstacles for a set of $80$ incidence and observation directions that are distributed uniformly over the unit sphere. These directions are indicated in Figure \ref{fig:spherical_directions}. Compared to $40$ directions on some circle in one plane, these $80$ directions distributed on the unit sphere are actually much more sparse. The reconstructions of the ellipsoid using the indicator function $I^{1/2}$ displays in Figure \ref{fig:ellipsoid_uniform} are still acceptable even though a lot of points with large values of the indicator function are located relatively far from the obstacle. We are convinced that significantly better results are possible using more directions of incidence and, in particular, by using data for higher frequencies.

This holds even more so for the two sphere obstacle. Here, as shown in Figure \ref{fig:twospheres_xz_uniform} (a), high values of the indicator function no longer produces a recognizable reconstruction of the obstacle. Surprisingly, from Figures \ref{fig:ellipsoid_uniform} and \ref{fig:twospheres_xz_uniform}, an interesting observation that we cannot yet explain from our theory, is that the obstacle can be well captured if we consider the low values of the indicator function. For a single direction, the strip $S_D(\theta)$ containing the obstacle is clearly identified, as shown in Figure \ref{fig:twospheres_xz_uniform} (b). However, the complicated shape of the non convex obstacle leads to significant areas with high indicator values away from the boundary of the obstacle. This demonstrates the limits of the proposed method when data is only available for a modest frequency range.

\begin{figure}[t]
  \centering
  \begin{tabular}{c@{\qquad\qquad\qquad}c}
    \includegraphics[width=0.32\linewidth]{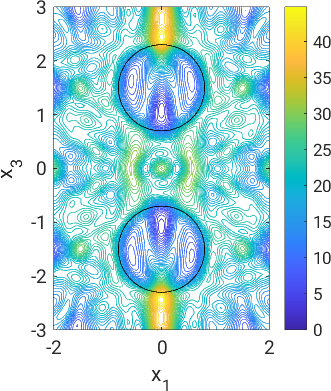}
    & \includegraphics[width=0.335\linewidth]{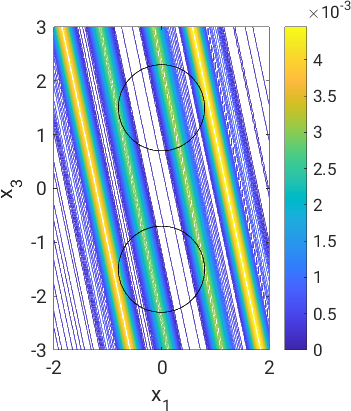} \\
    (a) & (b)
  \end{tabular}
  \vspace{-2ex}

 \caption{Reconstruction for two spheres with 80 uniformly distributed directions; (a) $I^{1/2}$ for all directions; (b) $I_{\bd}$ for $\bd \approx (0.960, -0.191, 0.203)^\top$.}
 \label{fig:twospheres_xz_uniform}
\end{figure}

\section{Conclusions}
In this paper we make a first step for inverse electromagnetic scattering problems with multi-frequency backscattering electric far field patterns taken at sparse directions. In particular, we show that the smallest strip with normal $\bd$ containing the unknown object can be approximately reconstructed by the multiple high frequency backscattering far field patterns taken at the observation directions $\pm\bd$. This establishes the theoretical basis for determining the location and shape of the object by backscattering far field data at sparse directions. One fast and robust direct sampling method is also proposed to recover the unknown object. Some numerical examples are designed to verify the effectiveness and robustness of the proposed method. The reconstructions further show that even the concave part can be well captured with an adequate number of observation directions.

For simplicity, we have focused on the perfect conductor in this paper. To deal with the nonlinearity of the inverse problem, we have considered the physical optics approximation with high frequencies for the theoretical analysis. However, the numerical reconstructions show that the proposed direct sampling method works very well even for moderately sized frequencies. To fully explain this observation, new tools and techniques have to be considered. We hope to deliver this in a forthcoming paper.

\section*{Acknowledgement}
The research of X. Ji is supported by the NNSF of China under grants 91630313 and 11971468,
and National Centre for Mathematics and Interdisciplinary Sciences, CAS.
The research of X. Liu is supported by the NNSF of China under grant 11971701, and the Youth Innovation Promotion Association, CAS.

\bigskip

%
%

\end{document}